\numberwithin{equation}{section}
\DeclareMathOperator{\KL}{\mathrm{KL}}
\DeclareMathOperator{\Law}{\mathrm{Law}}
\newtheorem{theorem}{Theorem}[section]
\newtheorem{lemma}{Lemma}[section]
\newtheorem{assumption}{Assumption}[section]
\newtheorem{proposition}{Proposition}[section]
\theoremstyle{remark}
\newtheorem{remark}{Remark}[section]
\newcommand\keywords[1]{\textbf{Keywords:} #1}
\newcommand{\tone}{\textbf{1}}
\newtheorem*{theorem*}{Theorem}
\begin{document}
\title{Long-time reverse transportation inequalities for non-globally-dissipative Langevin dynamics}

\author[a,b,c]{Jianfeng Lu\thanks{E-mail:jianfeng@math.duke.edu}}
\author[a]{Yuliang Wang\thanks{E-mail:yuliang.wang2@duke.edu}}
\affil[a]{Department of Mathematics,  Duke University, Durham, NC, USA.}
\affil[b]{Department of Physics, Duke University, Durham, NC, USA.}
\affil[c]{Department of Chemistry, Duke University, Durham, NC, USA.}
%\affil[d]{Shanghai Artificial Intelligence Laboratory}

\date{}
\maketitle

\begin{abstract}

    %We establish a reverse transportation inequality which controls the R\'enyi divergence of an arbitrary order between $\delta_xP_T$ and $\delta_{x'}P_T$ by $|x-x'|$ for arbitrary time $T$, where $P_T$ is the Markov semigroup associated with Langevin dynamics with a nonconvex potential. The reverse transportation inequality is dimension-free, uniform-in-time, and is the dual version of Harnack inequality. Remarkably, our reverse transportation inequality still has the exponential decay in the long time region, which can be viewed as an improvement of existing results for log-concave sampling.

    We establish a dimension-free, uniform-in-time reverse transportation inequality for Langevin dynamics with non-convex potentials. This inequality controls the R\'enyi divergence of arbitrary order between the process distributions starting from distinct initial points and serves as the dual version of the Harnack inequality. Notably, we prove that this inequality retains exponential decay in the long-time regime, thereby extending existing results for log-concave sampling to the non-convex setting.
\end{abstract}

\keywords{non-log-concave sampling, reflection coupling, shifted interpolation, Kullback-Leibler divergence, Girsanov transform, one-sided Lipschitz}

\textbf{MSC number:} 60H10, 62D05, 37A25.

% 60H10  	Stochastic ordinary differential equations (aspects of stochastic analysis) [See also 34F05]
% 62D05  	Sampling theory, sample surveys
% 37A25  	Ergodicity, mixing, rates of mixing

\section{Introduction}

Consider overdamped Langevin dynamics for non-log-concave sampling in $\mathbb{R}^d$
\begin{equation}\label{eq:overdamped}
    dX_t = -\nabla U(X_t)dt + \sqrt{2}\,dB_t,
\end{equation}
where $B_t$ is the standard $d$-dimensional Brownian motion and $U: \mathbb{R}^d \rightarrow \mathbb{R}$ is a non-globally convex potential, and the target distribution $\pi \propto e^{-U}$ is the invariant measure of the stochastic differential equation (SDE) \eqref{eq:overdamped}.
For any time $t$, denote the corresponding Markov semigroup by $P_t$, so that $\delta_x P_t$ is the law of $X_t$ that solves \eqref{eq:overdamped} with initial condition $X_0 = x$. In this paper, we establish a long-time reverse transportation inequality (also called the entropy-cost inequality \cite{wang2010harnack, ren2025bi}): 
\begin{equation}\label{eq:KLcontractionintro}
    \KL\left(\delta_x P_T \,\Vert \, \delta_{x'} P_T \right) \leq C \frac{\nu}{e^{\nu T}-1} |x-x'| (|x-x'| \vee \tilde{R}),\quad \forall T > 0.
\end{equation}
Above, $C$, $\nu$ and $\tilde{R}$ are dimension-free positive constants and $\KL\left(\cdot\| \cdot \right)$ denotes the Kullback-Leibler (KL) divergence.
This can also be seen as the dual version of the log-Harnack inequality (see Section \ref{sec:harnack} below).
We further extend the result to the more general R\'enyi divergence, in particular for R\'enyi divergence of order $q \geq 1$ (note that R\'enyi divergence is non-decreasing with respect to the order $q$).  Recall that the R\'enyi divergence of order $q$ between two probability measures $\mu_1 \ll \mu_2$ is defined by
\begin{equation}
    \mathcal{R}_q(\mu_1 \,\Vert\, \mu_2)=
    \begin{cases}
        \displaystyle \frac{1}{q-1} \log \int\left(\frac{d\mu_1}{d\mu_2}\right)^q \mathrm{d} \mu_2 ,                                    & \text{if } 0<q<\infty,\, q \neq 1 ; \\[4mm]
        \displaystyle \int \frac{d\mu_1}{d\mu_2} \log \left(\frac{d\mu_1}{d\mu_2}\right) \mathrm{d} \mu_2 = \KL(\mu_1 \,\Vert\, \mu_2), & \text{if } q=1 .
    \end{cases}
\end{equation}
If $\mu_1$ is not absolutely continuous with respect to $\mu_2$, we set $\mathcal{R}_q(\mu_1 \,\Vert\, \mu_2) = +\infty$. Our proof is mainly based on a specially designed coupling involving an auxiliary process with an additional drift (see for instance \cite{altschuler2024shifted, wang2012coupling}) and a reflection of the noise (see for instance \cite{eberle2016reflection, li2025relative}). Crucially, the strength of the additional drift must be carefully chosen.  See more details in Section \ref{sec:coupling} below.

Establishing inequalities of the form \eqref{eq:KLcontractionintro} is crucial in information theory, optimal transport and Bayesian statistics. It is well known that, via standard transportation inequalities \cite{talagrand1991new, bolley2005weighted}, the Wasserstein distance can be controlled by the square root of KL-divergence when the distribution considered satisfies certain conditions such as a log-Sobolev inequality or a sub-Gaussian tail. The reverse version of this transportation inequality does not generally hold, since the R\'enyi divergence, as a stronger measure of discrepancy between distributions, can blow up when absolute continuity does not hold. However, if we instead consider the divergence between two diffusion systems where the entropy decays in time according to the H-theorem in information theory, the reverse transportation inequality may hold under certain assumptions \cite{wang2010harnack, altschuler2024shifted}, which usually include stronger curvature conditions. Hence, obtaining such reverse transportation inequalities under weaker assumptions is of great significance. On the one hand, we are able to have a better understanding of the Langevin sampling with a non-log-concave target distribution.
% Existing results of the form \eqref{eq:KLcontractionintro} are established upon a global strongly convex potential $U$, but our result tells that such inequality is also valid under much weaker assumptions that include double-welled potentials. Also, 
On the other hand, as the dual version of the log-Harnack inequality, such results may have broader applications in related fields \cite{ambrosio2015bakry, wang2013harnack, huang2025exponential}.

\paragraph{Literature review.} In recent years, some related results have been established under either weaker metrics or stronger curvature conditions. In \cite{arnaudon2006harnack}, the authors considered auxiliary coupled dynamics with an additional drift and, via the Girsanov theorem, obtained a dimension-free Harnack inequality for diffusion semigroups on Riemannian manifolds. This technique is also called the shifted Girsanov argument in some later papers.
More recently, for the dual version, Altschuler et al. \cite{altschuler2024shifted, altschuler2025shifted} used a similar shifted Girsanov approach for both overdamped and underdamped Langevin dynamics to obtain a long-time reverse transportation inequality which decays exponentially for large time when the potential is globally strongly convex.  In their series, the authors also adapted the method to analyze various numerical schemes for Langevin dynamics.
In \cite{ren2025bi}, using a bi-coupling technique, Ren et al. established a short-time reverse transportation inequality, with an application to McKean-Vlasov SDEs with spatial-distribution dependent noise.
In \cite{huang2025exponential}, Huang et al. proved the exponential ergodicity under both KL divergence and Wasserstein-$2$ distance via hypocoercivity for a class of SDEs with degenerate noise (covering the underdamped Langevin equation with a globally strongly convex potential).
Other related results include the exponential convergence of \eqref{eq:overdamped} and its numerical schemes to the invariant measure $\pi \propto e^{-U}$ in R\'enyi divergence \cite{cao2019exponential, chewi2025analysis, vempala2019rapid, li2022sharp}, where the target need not be log-concave.
To the best of our knowledge, however, there are no rigorous results regarding long-term control for $\mathcal{R}_q\left(\delta_x P_T \,\Vert\, \delta_{x'} P_T \right)$ in the non-convex setting. This gap partially motivates our work.

As mentioned above, in addition to the shifted Girsanov argument \cite{arnaudon2006harnack, altschuler2024shifted, altschuler2025shifted, wang2012coupling}, we employ reflection coupling. This technique, designed to handle non-globally dissipative stochastic systems driven by Brownian motion, first appeared in \cite{lindvall1986coupling}. In recent years, this technique has been adopted and modified to study the Wasserstein contraction / exponential convergence of various stochastic systems, ranging from overdamped and underdamped Langevin dynamics \cite{eberle2016reflection, eberle2011reflection, eberle2019couplings, schuh2024global, kazeykina2024ergodicity, guillin2022convergence}, Hamiltonian Monte Carlo \cite{bou2020coupling, chak2025reflection}, interacting particle systems \cite{eberle2016reflection, guillin2022convergence, schuh2024global, kazeykina2024ergodicity, cao2024empirical, an2025convergence, jin2023ergodicity}, and their numerical discretizations \cite{ye2024error, majka2020nonasymptotic, li2025ergodicity, li2025relative}. See more details in Section \ref{sec:coupling} below.

\paragraph{Main contributions.} We prove uniform-in-time, dimension-free reverse transportation inequalities for overdamped Langevin dynamics with non-convex potentials. Our main contributions are:
\begin{itemize}
    \item We establish entropy-cost inequalities under a one-sided dissipativity condition, covering non-log-concave examples such as double-well and super-quadratically growing potentials.
    \item We obtain explicit long-time exponential contraction rates in KL divergence and R\'enyi divergence between Langevin laws started from different initial points.
    \item We develop a coupling argument that combines the shifted Girsanov method with reflection coupling and a carefully chosen additional drift. This construction also yields the corresponding uniform-in-time Harnack inequalities by duality.
\end{itemize}
The framework is also flexible enough to suggest extensions to discrete-time sampling algorithms and differential privacy for noisy SGD / stochastic gradient Langevin descent with non-convex loss functions.

\paragraph{Organization of the paper.} The remainder of the paper is organized as follows. In Section \ref{sec:coupling}, we give the basic assumptions, the construction of the couplings and the Lyapunov function, as well as some auxiliary lemmas. In Section \ref{sec:KLcontraction}, based on the construction in Section \ref{sec:coupling}, we prove the reverse transportation inequalities under the KL divergence and then extend the result to the R\'enyi divergence. We also prove the duality between the reverse transportation inequalities and uniform-in-time Harnack inequalities in Section \ref{sec:harnack}. Section \ref{sec:conclusion} provides concluding remarks and discusses possible extensions to other models, including the underdamped Langevin equation and stochastic many-body systems without a globally-dissipative drift. More detailed derivations regarding the Girsanov transform and R\'enyi divergence are provided in the Appendix.

\section{Construction of coupling and Lyapunov function}\label{sec:coupling}
\subsection{Basic assumptions}

Throughout this paper, we make the following assumptions for the potential function $U$, which is much weaker than the strong convexity required in many related results. Also note that we do not need a global Lipschitz assumption, instead we assume the weaker one-sided Lipschitz condition as below.

\begin{assumption}\label{ass:farfield}
    There exists $R>0$ and $L \geq m >0$ such that for all $x$, $y \in \mathbb{R}^d$,
    \begin{equation}\label{eq:monotone}
        -(x-y) \cdot \bigl( \nabla U(x) - \nabla U(y)\bigr) \leq \begin{cases}
            -m\lvert x - y \rvert^2, & \text{if} \,\,\lvert x - y \rvert > R,    \\
            L\lvert x - y \rvert^2,  & \text{if} \,\,\lvert x - y \rvert \leq R.
        \end{cases}
    \end{equation}
\end{assumption}
A sufficient condition for Assumption \ref{ass:farfield} is that $U \in C^2$ and is strongly convex outside a compact set.
An example satisfying Assumption \ref{ass:farfield} is $U(x) = |x|^4 - |x|^2$, which is often referred to as double well potential, and classical theories for log-concave sampling cannot be applied to it.

\subsection{Auxiliary coupled dynamics with noise reflection}
Fixing $T>0$ and $x$, $x' \in\mathbb{R}^d$, we construct the following coupled dynamics for $t \in [0,T]$:
\begin{subequations}\label{eq:coupling}
    \begin{align}
             & \quad dX_t = -\nabla U(X_t) dt + \sqrt{2}\,dB_t,          &             & X_0 = x.   \\
         & \left\{
        \begin{aligned}
             & dX''_t = -\nabla U(X''_t) dt +\eta_t\frac{X_t - X''_t}{\sqrt{|X_t - X''_t|}} dt+ \sqrt{2}\,d\bar{B}_t, &  & \text{for}\,\, t < \tau;     \\
             & X''_t = X_t,                                                                                 &  & \text{for} \,\, t \geq \tau, \\
        \end{aligned}
        \right.
         &                                                 & X''_0 = x'.              \\[3mm]
         & \quad dX'_t = -\nabla U(X'_t)dt   + \sqrt{2}\,d\bar{B}_t, &             & X'_0 = x'.
    \end{align}
\end{subequations}
Above, $\tau := \inf\{t\geq 0: X_t = X''_t \}$ and $\eta_t$ is a nonnegative process 
to be determined later (see a deterministic construction in \eqref{eq:etatsingular} for the KL divergence, and a stochastic one in \eqref{eq:boundedeta} for the R\'enyi divergence). A key construction here is that the Brownian motions are reflected, as elaborated below:
Define smooth cut-off functions $rc(r)$ and $sc(r)$ satisfying $\forall r \geq 0$, $rc(r), sc(r) \in [0,1]$, $rc(r)^2 + sc(r)^2 = 1$, and
\begin{equation}\label{eq:rcsc}
    rc(r) = 0 \quad \text{if} \quad r \geq R+1,\quad\,\,rc(r) = 1 \quad \text{if} \quad r \leq R.
\end{equation}
Denote $Z_t := X_t - X''_t$. Let $B_t^{sc}$ and $B_t^{rc}$ be two independent Brownian motions. Then $B_t$, $\bar{B}_t$ are defined by
\begin{subequations}\label{eq:reflectednoise}
    \begin{align}
         & B_t := \int_0^t rc(|Z_s|)\,\mathrm{d}B^{rc}_s + \int_0^t sc(|Z_s|)\,\mathrm{d}B_s^{sc}. \\
         & \notag \\
         & \bar{B}_t :=
        \begin{cases}\displaystyle
            \int_0^t rc(|Z_s|)\left(I - 2\frac{Z_s \otimes Z_s}{|Z_s|^2} \right) \,\mathrm{d}B^{rc}_s+ \int_0^t sc(|Z_s|)\,\mathrm{d}B_s^{sc}, & \quad\text{if} \,\, t<\tau,     \\
            \bar{B}_\tau+ B_t-B_\tau,                                                                                                                             & \quad\text{if}\,\, t \geq \tau. \\
        \end{cases}
    \end{align}
\end{subequations}
% \begin{equation*}
% e_t :=\left\{
% \begin{aligned}
% &\frac{X_t - X''_t}{|X_t - X''_t|},\quad\text{if}\, X_t \neq X''_t; \\
% &  \text{any unit vector}, \quad\text{otherwise}.
% \end{aligned}
% \right.
% \end{equation*}
% and
% \begin{equation*}
%     \tau := \inf\{t\geq 0: X_t = X''_t \}.
% \end{equation*}

The processes $X$, $X'$, $X''$ are well-defined It\^o processes. By L\'evy's characterization, $B_t$ and $\bar{B}_t$ are Brownian motions: the stochastic integrals in \eqref{eq:reflectednoise} have predictable integrands and quadratic variation $tI$, and the continuation after $\tau$ preserves continuity and quadratic variation. Thus $(X, X')$ are two realizations of the overdamped Langevin equation \eqref{eq:overdamped} with initials $(x,x')$. For the interpolation process $X''$ whose initial is $x'$, we will later show that under a suitable choice of $\eta_t$, $X''_t \rightarrow X_t$ in $L^1$ sense as $t \rightarrow T_{-}$. In the singular-control argument, the joint lower-semicontinuity of KL divergence then gives the desired upper bound after passing to the limit; see the remark below for details. See Figure \ref{fig:illustration} for an illustration of the auxiliary dynamics $X''$.
\begin{figure}[htbp]
    \centering
    \includegraphics[width=0.6\linewidth]{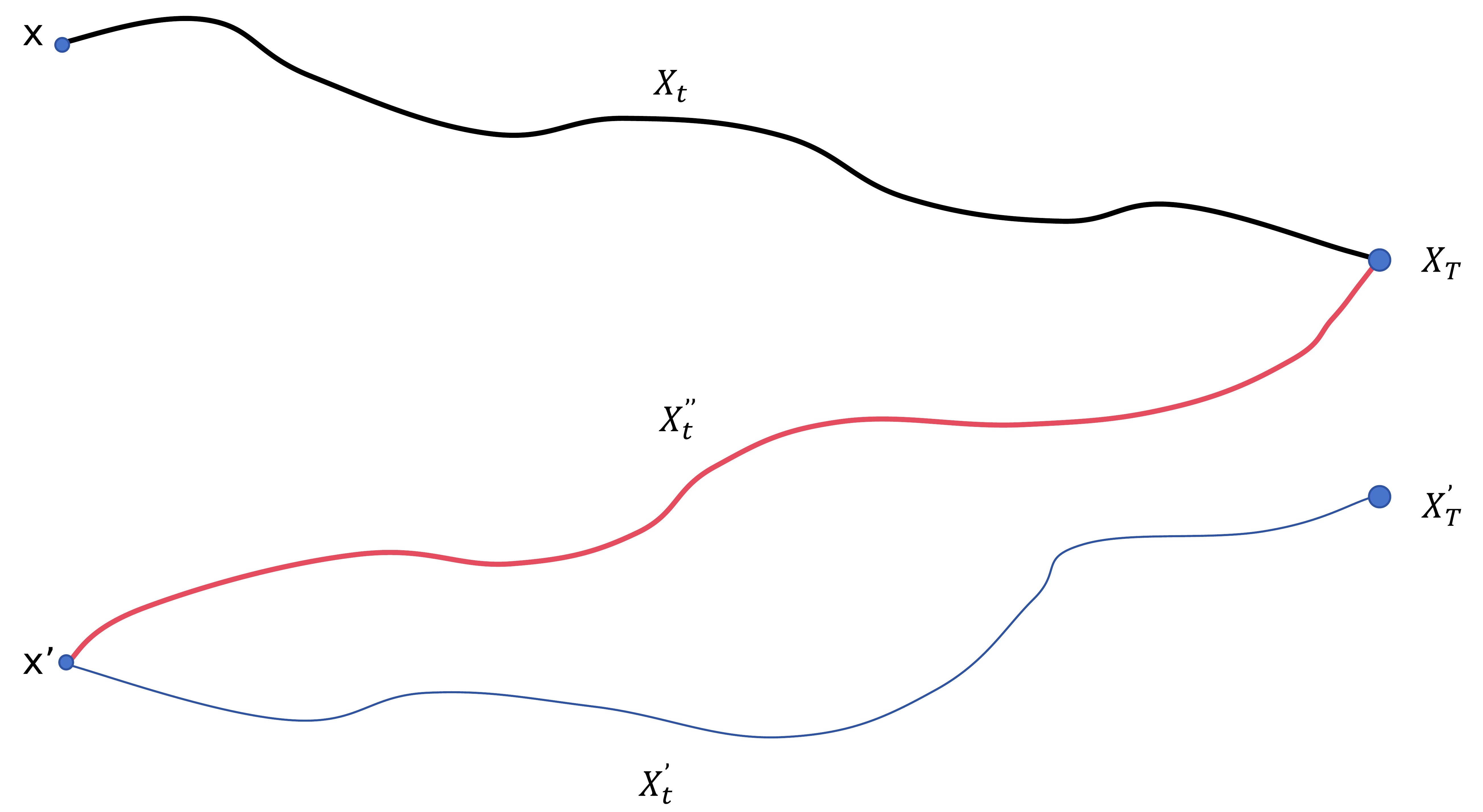}
    \caption{\textbf{An illustration of the coupling \eqref{eq:coupling}.} $X$ and $X'$ are two realizations of the overdamped Langevin equation \eqref{eq:overdamped}. $X''$ is an auxiliary dynamics that interpolates $X$ and $X'$. The KL or R\'enyi divergence between $X'$ and $X''$ is then computed using the Girsanov transform.}
    \label{fig:illustration}
\end{figure}

% (an illustrative figure)

\begin{remark}
    % The choice of the additional drift in $X''$ is not unique. Initially, we choose an $O(\sqrt{|Z_t|})$ additional drift with a bounded $\eta_t$ because $L^1$-contraction is more natural with the presence of reflection coupling (see ... (citation)). With the pointwise bound of $|Z_t|$ proved in Lemma \ref{lmm:asbound} below, we actually have more flexibility in choosing the additional drift, and we give a concrete example in Appendix \ref{app:anothercoupling} by choosing it to be $\eta_t (X_t - X_t'')$ with $\eta_t$ blowing up at $t=T$
    We choose an $O(\sqrt{|Z_t|})$ additional drift with bounded $\eta_t$ because $L^1$-contraction is well suited to reflection coupling; see, for instance, \cite{eberle2016reflection, schuh2024global, jin2023ergodicity, li2025relative, an2025convergence}. A similar reverse transportation inequality under KL divergence can also be obtained from an $O(|Z_t|)$ additional drift, provided one has an $L^2$ decay estimate for $Z_t$. Such an estimate can be derived from reflection coupling by using a different Lyapunov function $g: \mathbb{R}_{+} \rightarrow \mathbb{R}_{+}$ satisfying $g(r) \leq \tone_{\{r\leq R \}}r + \tone_{\{r>R \}}r^2$ and $g''(r) + rg'(r) \lesssim -g(r)$ for small $r$. Typical choices of $g$ appear in the literature on exponential convergence in Wasserstein-$p$ distance ($p \geq 2$) via reflection coupling; see, for example, \cite{luo2016exponential, majka2020nonasymptotic}. Since this alternative route gives the same upper bound as Theorem \ref{thm:overdamped} up to constants, we do not pursue the details.

    The main advantage of the $O(\sqrt{|Z_t|})$ drift is that it permits a non-singular control $\eta_t$, which is crucial for the R\'enyi-divergence estimate. Indeed, the force $\sqrt{r}$ is stronger than $r$ near zero: for a bounded nonnegative control $\eta_t$, the ODE $\dot{x} = -\eta_t \sqrt{x}$ with $x_0>0$ can reach $0$ in finite time, whereas $\dot{x} = -\eta_t x$ cannot.

\end{remark}

\subsection{Choice of Lyapunov function}
To rigorously characterize the $L^1$-contraction behavior of the coupling above (\textit{i.e.}, exponential decay of $\mathbb{E}|X_t - X''_t|$), a common approach is to seek an increasing concave Lyapunov function $f: \mathbb{R}_{+} \rightarrow \mathbb{R}_{+}$ satisfying
\begin{equation}\label{eq:propertyf}
    rf'(r) + f''(r) \lesssim -f(r) \quad\text{when r is small}.
\end{equation}
For the case of the overdamped Langevin equation, a simple and useful choice of such $f$ is (see also \cite{bou2020coupling, li2025relative, li2025ergodicity}):
\begin{equation}\label{eq:Lyapunov}
    f(r) := \int_0^r e^{-C_f(r' \wedge R_f)}\,\mathrm{d}r',\quad r\geq 0.
\end{equation}
The function $f$ is increasing, concave, and linear for $r \geq R_f$. Moreover, it satisfies
\begin{equation}
    e^{-C_f R_f}r \leq f(r) \leq r,\quad r\geq 0.
\end{equation}
This implies that the Wasserstein-1 distance between two probability measures $\mu_1$ and $\mu_2$ defined by (here $\Pi(\mu_1, \mu_2)$ denotes all joint distributions whose marginal distributions are $\mu_1$ and $\mu_2$, respectively)
\begin{equation}\label{eq:W1}
    W_{1}(\mu_1, \mu_2):=\inf _{\gamma \in \Pi(\mu_1, \mu_2)} \int_{\mathbb{R}^{d} \times \mathbb{R}^{d}}\lvert x - y \rvert \,\mathrm{d} \gamma
\end{equation}
is equivalent to the Kantorovich-Rubinstein distance $W_f$ induced by $f(\cdot)$ defined by
\begin{equation}\label{eq:Wf}
    W_{f}(\mu_1, \mu_2):=\inf _{\gamma \in \Pi(\mu_1, \mu_2)} \int_{\mathbb{R}^{d} \times \mathbb{R}^{d}}f(\lvert x - y \rvert) \,\mathrm{d} \gamma.
\end{equation}
In the proofs below, we will choose
\begin{equation*}
    R_f = R,\quad C_f = \frac{1}{4}R(L+m).
\end{equation*}

\subsection{Some useful lemmas}
We obtain some useful lemmas based on the construction of the coupling and Lyapunov function above. The first observation is that, once $X$ and $X''$ are synchronously coupled in the far field, $|X_t - X_t''|$ has a uniform-in-time almost surely upper bound. 
% This fact is used in the proof of Theorem \ref{thm:renyi} below, via controlling the exponential moment by the first moment.
This fact will be repeated used in the proofs below.

\begin{lemma}\label{lmm:asbound}
    Under Assumption \ref{ass:farfield}, if $\eta_t \geq 0, \; \forall \, t\geq 0$, then it holds that
    \begin{equation}\label{eq:asbound}
        |X_t - X_t''| \leq |x-x'| \vee (R+1) \quad \text{ for }\forall t \geq 0 \text{ a.s.}
    \end{equation}
\end{lemma}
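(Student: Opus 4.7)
The plan is to pass to the squared distance $|Z_t|^2$, where $Z_t := X_t - X_t''$, so that both the non-smoothness of $|Z_t|$ at the origin and the $1/\sqrt{|Z_t|}$ singularity of the interpolation drift are avoided. For $t < \tau$ the reflected-noise construction \eqref{eq:reflectednoise} gives
\begin{equation*}
d(B_t - \bar B_t) = 2\, rc(|Z_t|)\, \hat Z_t \hat Z_t^\top dB_t^{rc}, \qquad \hat Z_t := Z_t/|Z_t|,
\end{equation*}
and L\'evy's theorem identifies $d\xi_t := \hat Z_t \cdot dB_t^{rc}$ as a standard one-dimensional Brownian increment. A direct It\^o computation then yields
\begin{equation*}
d|Z_t|^2 = \bigl[-2 Z_t \cdot (\nabla U(X_t) - \nabla U(X_t'')) - 2\eta_t |Z_t|^{3/2} + 4\, rc(|Z_t|)^2\bigr]\,dt + 4\, rc(|Z_t|)\, |Z_t|\, d\xi_t.
\end{equation*}

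The key structural observation I would exploit is a far-field freeze: as soon as $|Z_t| \geq R+1$, the cut-off forces $rc(|Z_t|) = 0$, so both the martingale part and the It\^o correction vanish identically; simultaneously $|Z_t| > R$ triggers Assumption \ref{ass:farfield}, which gives $-2 Z_t \cdot (\nabla U(X_t) - \nabla U(X_t'')) \leq -2m |Z_t|^2$. Combined with the standing sign condition $\eta_t \geq 0$, this means that on any excursion into $\{|Z_t| \geq R+1\}$ the map $t \mapsto |Z_t|^2$ is purely deterministic and monotonically nonincreasing.

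To close the argument, I would set $L := |x - x'| \vee (R+1)$ and run a standard barrier-type stopping-time argument. For each $\varepsilon > 0$ define $\sigma_\varepsilon := \inf\{t \geq 0 : |Z_t|^2 \geq L^2 + \varepsilon\}$, and on the event $\{\sigma_\varepsilon < \infty\}$ let $\sigma_L := \sup\{t \leq \sigma_\varepsilon : |Z_t|^2 \leq L^2\}$, which is well-defined since $|Z_0|^2 \leq L^2$. Path continuity gives $|Z_{\sigma_L}|^2 = L^2$ and $|Z_t|^2 > L^2 \geq (R+1)^2$ on $(\sigma_L, \sigma_\varepsilon)$, so the far-field freeze applies throughout this interval; integrating the It\^o expansion forces $|Z_{\sigma_\varepsilon}|^2 - |Z_{\sigma_L}|^2 \leq 0$, contradicting $|Z_{\sigma_\varepsilon}|^2 - |Z_{\sigma_L}|^2 = \varepsilon > 0$. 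Hence $\sigma_\varepsilon = \infty$ a.s., and sending $\varepsilon \downarrow 0$ along a countable sequence yields $|Z_t|^2 \leq L^2$ for all $t < \tau$ a.s.; the claim for $t \geq \tau$ is immediate since $X_t'' = X_t$ there.

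The main obstacle I foresee is bookkeeping around the three coupling regimes (synchronous coupling in the far field, reflected coupling in the annulus $R \leq |Z_t| \leq R+1$, and absorption at $\tau$), and in particular ensuring that the far-field freeze is invoked only on excursions whose entire interior lies in $\{|Z_t| \geq R+1\}$. It is also worth flagging that the hypothesis $\eta_t \geq 0$ is genuinely load-bearing rather than decorative: a drift with the wrong sign would inject an outward contribution $-2\eta_t |Z_t|^{3/2}$ of the wrong sign which could dominate the far-field dissipation and destroy the uniform bound.
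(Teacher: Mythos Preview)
Your argument is correct and rests on the same structural observation as the paper's proof: once $|Z_t|\ge R+1$ the cut-off $rc$ kills the diffusion (and with it the It\^o correction), while Assumption~\ref{ass:farfield} together with $\eta_t\ge 0$ forces the remaining drift to be nonpositive, so the process cannot climb above the barrier $L=|x-x'|\vee(R+1)$. The paper runs It\^o on $|Z_t|$ rather than $|Z_t|^2$, records the same inequality $d|Z_t|\le 0$ on $\{|Z_t|\ge R+1\}$, and then simply asserts the bound, deferring the rigorous conclusion to the Stroock--Varadhan support theorem. Your version is more self-contained: working with $|Z_t|^2$ is a harmless cosmetic change (the singularity of $|\cdot|$ at the origin never actually bites here, since all the action is in the far field), but your explicit last-exit/first-hitting contradiction replaces the paper's black-box citation with an elementary pathwise argument. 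The one technicality worth tightening in a final write-up is that $\sigma_L$ is a last-exit time, not a stopping time; this is innocuous because on $(\sigma_L,\sigma_\varepsilon)$ the diffusion coefficient vanishes identically, so the local-martingale part has zero quadratic variation and is therefore pathwise constant there---which is precisely what your integration step needs.
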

\begin{proof}
    Denote $Z_t = X_t - X''_t$. Since $Z_t = 0$ if $t \geq \tau$, it remains to consider $t < \tau$. By It\^o's formula,  
    \begin{equation}\label{eq:dZt}
        \begin{aligned}
            d|Z_t| & = \frac{Z_t}{|Z_t|} \cdot \left(-\nabla U(X_t) + \nabla U(X_t'') - \eta_t\frac{Z_t}{\sqrt{|Z_t|}} \right) dt + 2\sqrt{2}\,rc(|Z_t|) db^{rc}_t \\
                   & \leq \left(L\tone_{\{|Z_t| \leq R \}} - m\tone_{\{|Z_t| > R\}}\right)|Z_t| dt-\eta_t \sqrt{|Z_t|} dt + 2\sqrt{2}\,rc(|Z_t|) db^{rc}_t,
        \end{aligned}
    \end{equation}
    where $b^{rc}_t$ is a one-dimensional Brownian motion satisfying $db^{rc}_t = \frac{Z_t}{|Z_t|}\cdot dB^{rc}_t$. Since $rc(|Z_t|) = 0$ if $|Z_t| \geq R+1$ by definition, and since $\eta_t$ is always nonnegative, one has
    \begin{equation*}
        d|Z_t| \leq -m|Z_t| dt - \eta_t \sqrt{|Z_t|} \leq 0 \quad \text{if} \quad |Z_t| \geq R+1.
    \end{equation*}
    This gives \eqref{eq:asbound}. A completely rigorous derivation would use the Stroock-Varadhan support theorem \cite{stroock1972support}; we omit it here for simplicity.
\end{proof}

The following proposition is crucial to the proof of Theorem \ref{thm:overdamped}, where we established the decay for $\mathbb{E}|X-X''|$ with the help of the Lyapunov function $f$ defined in \eqref{eq:Lyapunov}. In particular, by choosing a singular control $\eta_t$, we prove that $X$ and $X''$ must meet in finite time. 
We will establish a variant of Proposition \ref{prop:L1contraction} to prove Theorem \ref{thm:renyi} in Section \ref{sec:renyi}, where we will choose a different $\eta_t$ for technical reasons.

\begin{proposition}\label{prop:L1contraction}
    Under Assumption \ref{ass:farfield},  Let $f(\cdot)$ be a Lyapunov function as defined in \eqref{eq:Lyapunov} with $R_f = R$ and $C_f = \frac{1}{4}R(L+m)$. 
    Choose
    \begin{equation}\label{eq:etatsingular}
        \eta_t =\frac{aa'}{e^{a(T-t)}-1},\quad t\in [0,T)
    \end{equation}
    with any $a$, $a'>0$.
    Then for any $t\in [0,T]$ and $x \neq x'$,
    \begin{equation}\label{eq:f_decay}
        \mathbb{E}|X_t-X_t''| \leq C_1\mathbb{E}f(|X_t-X_t''|) \leq C_1\exp\left(-2\nu t - C_1^{-1}\big(\sqrt{|x-x'|}\vee\sqrt{R+1}\big)^{-1}\int_0^t \eta_s ds \right)|x-x'|.
    \end{equation}
    % \begin{equation}\label{eq:sqrtf_decay}
    %     \mathbb{E}\left[\sqrt{f(|X_t - X_t''|)}\,\tone_{\{t<\tau \}}\right] \leq e^{-\nu t}\left(\sqrt{|x-x'|} -  C_0 \int_0^t e^{\nu s}\eta_s \mathbb{P}(s<\tau)\,\mathrm{d}s\right),
    % \end{equation}
    % and
    % \begin{equation}\label{eq:L1decay}
    %     \mathbb{E}|X_t - X_t''| \leq C_1\mathbb{E}f(X_t-X_t''|) \leq C_1 e^{-2\nu t}|x-x|,
    % \end{equation}
    where
    \begin{equation}
        \nu := \frac{m}{2}\exp\bigl(-\frac{1}{4}R^2(L+m)\bigr),\quad C_1 := \exp\bigl(\frac{1}{4}R^2(L+m)\bigr),
    \end{equation}
    In particular,
    \begin{equation}
        X_T = X_T'' \quad a.s..
    \end{equation}
    % Moreover, letting $\eta_t := \sqrt{|x-x'|}\,\bar{\eta}_t$, with $\bar{\eta}_t \geq 0,\; \forall\, t \geq 0$, one has
    % \begin{equation}
    %     \mathbb{E}|X_t - X''_t|
    %     \leq C_1M_{x,x'}|x-x'| e^{-\nu t}\left(1 - C_0 \int_0^t \bar{\eta}_se^{\nu s} \,\mathrm{d}s \right).
    % \end{equation}
    % where
    % \begin{equation*}
    %     C_1 := \exp\bigl(\frac{1}{4}R^2(L+m)\bigr),\quad M_{x,x'} := 1 \vee \sqrt{\frac{R+1}{|x-x'|}}.
    % \end{equation*}
\end{proposition}
\begin{proof}
    Recall that $Z_t := X_t - X_t''$.
    By It\^o's formula, when $t \leq \tau$,
    \begin{equation*}
        df(|Z_t|) = A_tdt + 2\sqrt{2}\,rc(|Z_t|)f'(|Z_t|) db^{rc}_t,
    \end{equation*}
    where $b_t^{rc}$ is a one-dimensional Brownian motion satisfying $db^{rc}_t = \frac{Z_t}{|Z_t|} \cdot dB^{rc}_t$, and
    \begin{equation*}
        A_t :=  f'(|Z_t|)\frac{Z_t}{|Z_t|} \cdot \left(-\nabla U(X_t) + \nabla U(X''_t) - \eta_t \frac{Z_t}{\sqrt{|Z_t|}} \right)+4f''(|Z_t|)rc^2(|Z_t|).
    \end{equation*}
    % Then Dykin's formula gives
    % \begin{equation}\label{eq:afterito}
    %     \frac{d}{dt}\mathbb{E}f(|Z_t|) = \mathbb{E}\left[\left(f'(|Z_t|)\frac{Z_t}{|Z_t|} \cdot \left(-\nabla U(X_t) + \nabla U(X''_t) - \eta_t \frac{Z_t}{\sqrt{|Z_t|}} \right)+2f''(|Z_t|) \right)\tone_{\{t<\tau \}}\right]
    % \end{equation}
    If $|Z_t| > R$, the far-field convexity condition plays the main role. In this case, $f'' \equiv 0$, $f' \equiv e^{-C_f R}$, and thus
    \begin{equation}\label{eq:boundedby}
        A_t \leq e^{-C_f R}\left(-m|Z_t| - \eta_t \sqrt{|Z_t|}\right).
    \end{equation}
    If $|Z_t| \leq R$, the reflection of noise as well as the concavity of $f$  plays the main role. In this case, since $rc(|Z_t|) = 1$, $A_t$ is bounded by
    \begin{equation*}
        A_t \leq -4C_fe^{-C_f |Z_t|} \frac{|Z_t|}{R} + Le^{-C_f |Z_t|} |Z_t| - \eta_t e^{-C_f|Z_t|}\sqrt{|Z_t|}.
    \end{equation*}
    Since $C_f = \frac{1}{4}R(L+m)$, one obtains \eqref{eq:boundedby} again for the case $|Z_t| \leq R$.
    Moreover, by Lemma \ref{lmm:asbound}, for any $t \geq 0$, $-\sqrt{|Z_t|}\leq -(\sqrt{|x-x'|}\vee\sqrt{R+1})^{-1}|Z_t|$ a.s.. Combining this with \eqref{eq:boundedby} leads to
    \begin{equation*}
        A_t \leq e^{-C_f R}\left(-m - \big(\sqrt{|x-x'|}\vee\sqrt{R+1}\big)^{-1}\eta_t \right)|Z_t|.
    \end{equation*}
    Note that $r \geq f(r)$ for all $r \geq 0$. Hence,
    \begin{equation*}
        \frac{d}{dt}\mathbb{E}[f(|Z_t|)\tone_{\{t<\tau \}}] \leq -e^{-C_f R}\left(m + \big(\sqrt{|x-x'|}\vee\sqrt{R+1}\big)^{-1}\eta_t \right) \mathbb{E}[f(|Z_t|)\tone_{\{t<\tau \}}].
    \end{equation*} 
    Since $f(|Z_t|) = 0$ when $t \geq \tau$, it holds that
\begin{equation*}
        \frac{d}{dt}\mathbb{E}[f(|Z_t|)] \leq -e^{-C_f R}\left(m + \big(\sqrt{|x-x'|}\vee\sqrt{R+1}\big)^{-1}\eta_t \right) \mathbb{E}[f(|Z_t|)].
    \end{equation*} 
Then the first claim \eqref{eq:f_decay} follows from Gr\"onwall's inequality and the fact that $e^{-C_f R}r\leq f(r) \leq r$ for all $r \geq 0$.

Moreover, choosing $\eta_t =\frac{aa'}{e^{a(T-t)}-1}$ with any $a, a'>0$,
\begin{equation*}
    -\int_0^t \eta_s ds = a'\log \frac{e^{a T}-e^{a t}}{e^{a T}-1}.    
\end{equation*}
Then at time $T$,
\begin{equation*}
    \exp\left(-2\nu T - C_1^{-1}\big(\sqrt{|x-x'|}\vee\sqrt{R+1}\big)^{-1}\int_0^T \eta_s ds \right) = e^{-2\nu T} \cdot 0 = 0,
\end{equation*}
which implies $X_T = X''_T$ a.s..
\end{proof}

\begin{remark}\label{rmk:singularity}
The control $\eta_t$ used above is singular as $t\rightarrow T$. To justify the argument rigorously, we may first work on the truncated interval $[0,T-\epsilon]$, where the control is bounded and the process $X''$ is well-defined for every $\epsilon>0$. This is a standard approximation in shifted coupling arguments; see, for instance, \cite[Remark 8]{altschuler2024shifted}. As $\epsilon \rightarrow 0$, we have $\Law(X''_{T-\epsilon}) \rightarrow \Law(X_T)$ and $\Law(X'_{T-\epsilon}) \rightarrow \Law(X'_T)$ weakly. Hence, by the joint lower semicontinuity of the KL divergence,
\begin{equation*}
    \KL(\Law(X_T) \,\Vert\, \Law(X'_T))
    \leq \liminf_{\epsilon \rightarrow 0}
    \KL(\Law(X''_{T-\epsilon}) \,\Vert\, \Law(X'_{T-\epsilon})).
\end{equation*}
It is therefore enough to establish an upper bound on $[0,T-\epsilon]$ uniformly in $\epsilon$ and then pass to the limit.

The same truncation also verifies Novikov's condition for the Girsanov transform. Indeed, with $u_t:=\eta_t(X_t-X_t'')/\sqrt{|X_t-X_t''|}$ before $\tau$ and $u_t=0$ after $\tau$, Lemma \ref{lmm:asbound} gives $|u_t|^2\leq \eta_t^2(|x-x'|\vee(R+1))$ a.s. Since $\eta_t$ is deterministic and $\int_0^{T-\epsilon}\eta_t^2\,dt<\infty$ for each fixed $\epsilon>0$, the relevant exponential moment is bounded by $\exp\{c(|x-x'|\vee(R+1))\int_0^{T-\epsilon}\eta_t^2\,dt\}<\infty$. Thus Novikov's condition holds on $[0,T-\epsilon]$.

\end{remark}

\section{Reverse transportation inequalities}\label{sec:KLcontraction}

\subsection{Reverse transportation inequality under KL divergence}

\begin{theorem}\label{thm:overdamped}
    Under Assumption \ref{ass:farfield}, for any $T>0$ and $x$, $x' \in \mathbb{R}^d$, one has 
    \begin{equation}
        \KL\left(\delta_x P_T \,\Vert\, \delta_{x'} P_T \right) \leq \frac{2C_1^3 \nu }{e^{2\nu T}-1}\,|x-x'|\,\big(|x-x'|\vee(R+1)\big),
    \end{equation}
    where
    \begin{equation}
        \nu := \frac{m}{2}\exp\left(-\frac{1}{4}R^2(L+m)\right),\quad C_1:= \exp\bigl(\frac{1}{4}R^2(L+m)\bigr).
    \end{equation}
\end{theorem}

\begin{remark}
    The rate is likely non-optimal: The choice of the Lyapunov function $f(\cdot)$ (recall \eqref{eq:Lyapunov} above) is not optimized, and it is in fact difficult to define the optimality for the choice of a Lyapunov function. In addition, given a fixed $f(\cdot)$ with free parameters $C_f$, $R_f$, the choice of $\nu$ is not optimized, which is influenced by the choice of $\eta_t$, $C_f$, $R_f$.

    The rate scales like $\frac{1}{T}$ when $T \rightarrow 0$ and $e^{-2\nu T}$ when $T \rightarrow \infty$. Also, $\nu = m/2$ if $U$ is globally strongly-convex. These are all consistent with existing results under the strongly convex setting (see, for instance \cite[Section 4.1]{altschuler2024shifted}). Moreover, under the non-convex setting (Assumption \ref{ass:farfield}), the rate decays exponentially with an $\exp(-R^2)$ coefficient, which is also consistent with related existing results under similar assumptions (see for instance \cite[Section 2.3]{eberle2016reflection}, \cite[Theorem 1.3]{luo2016exponential}, \cite[Theorem 5]{schuh2024global}).

\end{remark}

\begin{remark}[Sharpness of initial distance dependence]
One natural question is whether the initial dependence $|x-x'|(|x-x'| \vee (R+1))$ in Theorem \ref{thm:overdamped} can be improved to the quadratic dependence $|x-x'|^2$. This dependence is closely related to the regularity of $\frac{d^2}{dx^2} P_t$ in $x$, and it is quadratic when $U$ is globally convex. Under the current reflection coupling framework, however, the non-coupling probability of $Z_t = X_t- X''_t$ at a finite time $T$ is of order $O(|x-x'|)$, assuming a bounded drift in the SDE for $Z_t$. In this paper, we overcome this difficulty by either introducing a singular control $\eta_t$ or using a short terminal synchronous coupling. Both approaches yield reverse transportation inequalities (with the first approach restricted to the KL case), but at the cost of losing an $O(|x-x'|)$ factor when $|x-x'|$ is small. Technically, this loss comes from the a.s. bound $|Z_t| \leq |Z_0| \vee (R+1)$ in Lemma \ref{lmm:asbound}.

This open problem is closely related to Wasserstein-$p$ contraction for diffusion processes without a globally dissipative drift, which remains open to the best of our knowledge. For instance, reflection coupling in the literature yields Wasserstein-$1$ contraction, while for Wasserstein-$p$ with $p>1$ it typically yields only exponential decay. One reason is that the Lyapunov functions $g(\cdot)$ used there satisfy $r^p\leq g(r) \lesssim \tone_{\{r \leq R \}}r + \tone_{\{r > R \}}r^p$, rather than the desired $r^p \lesssim g(r) \lesssim r^p$. Existing results for Wasserstein-$p$ contraction with $p\geq 2$ require additional assumptions beyond Assumption \ref{ass:farfield}; see, for instance, \cite{monmarche2026long, monmarche2023wasserstein, huang2025exponential}.

\end{remark}

With the construction of the coupling and Lyapunov function in Section \ref{sec:coupling} above, as well as a careful choice of the time-dependent force strength $\eta_t$ to be presented in the proof, we are able to prove Theorem \ref{thm:overdamped}.

% The proof is based on a carefully designed combination of the reflection coupling and a third coupled process (a variant of \cite{altschuler2024shifted, wang2012coupling}), along with effective choices of coefficients that leads to contraction.

\begin{proof}[Proof of Theorem \ref{thm:overdamped}]
Fix any $T>0$. Without loss of generality, assume $x \neq x'$. Consider the coupling \eqref{eq:coupling} and the choice $\eta_t$ of the form
\begin{equation}
    \eta_t = \frac{aa'}{e^{a(T-t)}-1},
\end{equation}
where $a, a'>0$ are two constants to be determined later.
   By Proposition \ref{prop:L1contraction},
    \begin{equation}\label{eq:meetatT}
       X_T = X''_T \,\, a.s..
    \end{equation}
    Then the Girsanov transform and the data processing inequality give
    \begin{multline}\label{eq:girsanov}
        \KL\left(\delta_x P_T \,\Vert\, \delta_{x'} P_T \right) = \KL\left(\Law(X_T) \,\Vert\, \Law(X'_T) \right)\\
        = \KL\left(\Law(X''_T) \,\Vert\, \Law(X'_T) \right) \leq \frac{1}{4}\int_0^T \eta_t^2\mathbb{E}|X_t - X_t''| \,\mathrm{d}t.
    \end{multline}
By \eqref{eq:f_decay} in Proposition \ref{prop:L1contraction},
    \begin{equation}
    \begin{aligned}
        \KL\left(\delta_x P_T \,\Vert\, \delta_{x'} P_T \right) &\leq \frac{1}{4}C_1|x-x'|\int_0^T \eta_t^2 \exp\left(-2\nu t - C_1^{-1}\big(\sqrt{|x-x'|}\vee\sqrt{R+1}\big)^{-1}\int_0^t \eta_s ds \right)\,\mathrm{d}t\\
        &= \frac{1}{4}C_1|x-x'|\int_0^T\frac{a'^2a^2}{(e^{a(T-t)}-1)^2}e^{-2\nu t}(\frac{e^{aT}-e^{at}}{e^{aT}-1})^{a'C_1^{-1}\big(\sqrt{|x-x'|}\vee\sqrt{R+1}\big)^{-1}} \,\mathrm{d}t
    \end{aligned}
    \end{equation}
Choose 
\begin{equation}
    a' = 2C_1\big(\sqrt{|x-x'|}\vee\sqrt{R+1}\big),\quad a = 2\nu.
\end{equation}
Then
\begin{equation}
    \KL\left(\delta_x P_T \,\Vert\, \delta_{x'} P_T \right) \leq \frac{2C_1^3 \nu }{e^{2\nu T}-1}\,|x-x'|\,\big(|x-x'|\vee(R+1)\big).
\end{equation}
\end{proof}

\subsection{Reverse transportation inequality under R\'enyi divergence}\label{sec:renyi}

Under a similar setting, applying Girsanov transform, for $q > 1$, one can obtain that (see more details in Appendix \ref{app:girsanov}).
\begin{equation}\label{eq:RqGirsanov}
    \mathcal{R}_q(\delta_{x} P_T \,\Vert\, \delta_{x'} P_T) \leq \frac{1}{2(q - 1)}\log \mathbb{E}\left[\exp\left(\frac{(q-1) + 2(q-1)^2}{2} \int_0^T \eta^2_t|X_t - X_t''| \,\mathrm{d}t \right)\right].
\end{equation}
Here the expectation is taken under the controlled coupling, where $X''$ carries the additional drift and satisfies $X_T''=X_T$ a.s. The form of \eqref{eq:RqGirsanov} is chosen so that the coefficient remains finite as $q\to 1$; see Appendix \ref{app:girsanov}. 
Also note that in the construction of this subsection, the control $\eta_t$ (see \eqref{eq:boundedeta} below) depends on the value of $|X_{\delta T} - X''_{\delta T}|$ (for some $\delta\in(0,1)$), which is random. However, this randomness will not invalidate the Girsanov transform. In fact, since the constructed $\eta_t$ below remains zero when $t\in [0,\delta T]$, and since $X''$, $X'$ share the same Brownian motion and initial, we know that $X_t'' = X'_t$ a.s. for any $t \in [0,\delta T]$. This then enables one to apply the Girsanov-type argument combined with the tower property of conditional expectations. See more details in Appendix \ref{app:girsanov}.
% with random initial values and replace the initial time in \eqref{eq:RqGirsanov} by $\delta T$, which is valid due to the tower property (see Remark \ref{rmk:Girsanov_randominitial}). 
Moreover, the choice of $\eta_t$ in this subsection is a.s. bounded without any singularity, so Novikov's condition naturally holds, and there is no need for the $\epsilon$-argument in Remark \ref{rmk:singularity}. 

The estimation for the right-hand side of \eqref{eq:RqGirsanov} is more complicated than the KL divergence, since the expectation is outside the exponential and the time integral. Moreover, unlike the situation of synchronous coupling as in \cite{altschuler2024shifted} and \cite{altschuler2025shifted}, here the randomness of $X- X''$ comes from the trajectory of Brownian motion in $[0,T]$, so the order of expectation cannot be easily changed.

In order to overcome the above challenge, we make use of the a.s. bound in Lemma \ref{lmm:asbound} and propose a non-singular $\eta_t$ (see \eqref{eq:boundedeta} below). These then enable us to control the exponential function with a linear one, namely, for any $\bar{x}>0$, there exists $C_{\bar{x}}>0$ such that
\begin{equation}
    e^x - 1 \leq C_{\bar{x}} x,\quad \forall \,x\in[0,\bar{x}].
\end{equation}
Moreover, in order to retain long-time the exponential decay as well as the finite coupling time (i.e. $Z_T = 0$) properties, we consider a reflection coupling with $\eta_t = 0$ in $[0,\delta T]$ (which provides the contraction), and a synchronous coupling in $[\delta T, T]$ (which forces $Z_t$ to hit $0$ no later than time $T$), where we will choose $1-\delta = O(T^{-1})$ for large $T$.

In detail, for any $\delta \in (0,1)$,  we still consider the same triple $(X,X',X'')$ in \eqref{eq:coupling}, same $rc(\cdot)$, $sc(\cdot)$ in \eqref{eq:rcsc}, but replace \eqref{eq:reflectednoise} with the following:
\begin{subequations}\label{eq:reflectnoise1}
    \begin{align}
         & B_t := \int_0^t rc(|Z_s|)\,\mathrm{d}B^{rc}_s + \int_0^t sc(|Z_s|)\,\mathrm{d}B_s^{sc}. \\
         & \bar{B}_t :=
        \begin{cases}\displaystyle
            \int_0^t rc(|Z_s|)\left(I - 2\frac{Z_s \otimes Z_s}{|Z_s|^2} \right) \,\mathrm{d}B^{rc}_s+ \int_0^t sc(|Z_s|)\,\mathrm{d}B_s^{sc}, & \quad\text{if} \,\, t<\tau \,\, \text{and}\,\,t<\delta T,    \\
            \bar{B}_{\delta T}+B_t - B_{\delta T}, & \quad\text{if} \,\, t<\tau \,\, \text{and}\,\,t\geq \delta T,    \\
            \bar{B}_\tau + B_t - B_\tau,                                                                                                                               & \quad\text{if}\,\, t \geq \tau. \\
        \end{cases}
    \end{align}
\end{subequations}
Compared with \eqref{eq:reflectednoise}, the only difference in \eqref{eq:reflectnoise1} is that we always use the synchronous coupling in the time interval $[\delta T, T]$. Consequently, $X$, $X'$ are still two realizations of \eqref{eq:overdamped} with different initials. More importantly, Lemma \ref{lmm:asbound} is still valid under the modified noise \eqref{eq:reflectnoise1}, because it only relies on the fact that $rc(r) \equiv 0$ for all $r \geq R+1$. Moreover, we derive a similar auxiliary result, which is a modification of Proposition \ref{prop:L1contraction}:

\begin{proposition}\label{prop:L1contraction_new}
Under Assumption \ref{ass:farfield}, consider the coupling structure \eqref{eq:coupling} with \eqref{eq:reflectednoise} replaced by \eqref{eq:reflectnoise1}. Fix any $a\in \mathbb{R}$ and $\delta \in (0,1)$. Choose
 \begin{equation}\label{eq:boundedeta}
    \eta_t = \tone_{\{t \in [\delta T, T]\}} 2\sqrt{|X_{\delta T}-X''_{\delta T}|}\frac{aL-\frac{L}{2}}{e^{(aL-\frac{L}{2})(T-\delta T)}-1}\,e^{aL(t-\delta T)}.
\end{equation} 
We take $\eta_t|_{a=\frac{1}{2}} = \lim_{\tilde{a}\rightarrow \frac{1}{2}}\eta_t|_{a=\tilde{a}}$. Then for any $x \neq x'$,
 \begin{enumerate}
\item For $t\in[0,\delta T]$,
\begin{equation}\label{eq:firstpartL1}
    \mathbb{E}|X_t-X_t''| \leq C_1e^{-2\nu t}|x-x'|.
\end{equation}
The definitions of $C_1$ and $\nu$ above are the same as in Proposition \ref{prop:L1contraction}.
\item For $t \in [\delta T, T]$,
\begin{equation}\label{eq:secondpartpointwise}
    |X_t - X_t''| \leq e^{L(t-\delta T)}|X_{\delta T}-X''_{\delta T}| \left(1 - \frac{e^{(aL-\frac{L}{2})(t-\delta T)}-1}{e^{(aL-\frac{L}{2})(T-\delta T)}-1}\right)^2\,\,a.s..
\end{equation}
Consequently, 
\begin{equation}\label{eq:asequal}
    X_T = X_T''\,\,a.s..
\end{equation}
% and for $t \in [\delta T, T]$,
% \begin{equation}\label{eq:secondpartL1}
%     \mathbb{E}|X_t - X_t''|  \leq C_1 |x-x'|e^{-2\nu \delta T + M(t-\delta T)}\left(1 - \frac{e^{(aM-\frac{M}{2})(t-\delta T)}-1}{e^{(aM-\frac{M}{2})(T-\delta T)}-1}\right)^2.
% \end{equation}
 \end{enumerate}
\end{proposition}

\begin{proof}
The proof of \eqref{eq:firstpartL1} is exactly the same as that for \eqref{eq:f_decay} in Proposition \ref{prop:L1contraction}, because the coupling structure remains unchanged in the time interval $[0,\delta T]$.
To prove \eqref{eq:secondpartpointwise}, using the synchronous coupling and the one-sided Lipschitz condition for $\nabla U$, one has when $t<\tau$,
\begin{equation*}
    d|Z_t| \leq L|Z_t| dt - \eta_t \sqrt{|Z_t|} dt,
\end{equation*}
which implies 
\begin{equation*}
    \frac{d}{dt}\sqrt{|Z_t|} \leq \frac{L}{2}\sqrt{|Z_t|} - \frac{\eta_t}{2}. 
\end{equation*}
With the current choice of $\eta_t$, Gr\"onwall's inequality and the fact $|Z_t| = 0$ whenever $t\geq \tau$ give \eqref{eq:secondpartpointwise}.
Finally, taking $t=T$ in \eqref{eq:secondpartpointwise} gives \eqref{eq:asequal}.

\end{proof}

Then, combining the pointwise bound in Lemma \ref{lmm:asbound} and the decay of $\mathbb{E}|Z_t|$ in Proposition \ref{prop:L1contraction_new}, we are able to extend Theorem \ref{thm:overdamped} to the more general R\'enyi divergence case. Notably, the method we use to prove Theorem \ref{thm:renyi} can also be applied to prove Theorem \ref{thm:overdamped}. For simplicity, we will not restate the alternative proof for Theorem \ref{thm:overdamped}.

\begin{theorem}\label{thm:renyi}
    Under Assumption \ref{ass:farfield}, recall $\nu$ and $C_1$ in Theorem \ref{thm:overdamped}. For any $q > 1$, $T>0$, $x$, $x' \in \mathbb{R}^d$, and any $a\in\mathbb{R}$, $\delta \in (0,1)$, set $D:=|x-x'|\vee(R+1)$ and $c_q:=\frac{(q-1)+2(q-1)^2}{2}$. Then one has
    \begin{multline}\label{eq:renyi-finalthm}
\mathcal{R}_q (\delta_{x} P_T \,\Vert\, \delta_{x'} P_T)
\le \frac{1}{2(q-1)}\log\Big[1+\frac{C_1|x-x'|\,e^{-2\nu\delta T}}{D}\\
\cdot\Big(\exp\big(4\frac{c_q D^2(a-\tfrac12)^2 L\,(e^{(2a+1)L(T-\delta T)}-1)}{(2a+1)\big(e^{(a-\tfrac12)L(T-\delta T)}-1\big)^2}\big)-1\Big)\Big].
\end{multline}
where the fractions are interpreted by continuous extension when $a=\frac{1}{2}$ or $a=-\frac{1}{2}$. Moreover, with the choice
\begin{equation}
    a=1,\quad 1-\delta = \min(\frac{1}{2},\frac{1}{T}),
\end{equation}
asymptotically,
\begin{equation}\label{eq:asym_thm}
        \mathcal{R}_q(\delta_{x} P_T \,\Vert\, \delta_{x'} P_T) \lesssim
        \begin{cases}
            q\dfrac{D^2}{T}, & \text{as } T \rightarrow 0,      \\[3mm]
            |x-x'|\frac{e^{C_Lc_qD^2}-1}{(q-1)D}e^{-2\nu T},   & \text{as } T \rightarrow \infty,
        \end{cases}
    \end{equation}
    where $C_L:=\frac{L(e^{3L}-1)}{3(e^{L/2}-1)^2}$.
\end{theorem}

\begin{proof}
Fix $T>0$. Without loss of generality, assume $x \neq x'$. Still consider the coupling \eqref{eq:coupling} with \eqref{eq:reflectednoise} replaced by \eqref{eq:reflectnoise1}. Consider $\eta_t$ defined in \eqref{eq:boundedeta} above.
    By Proposition \ref{prop:L1contraction_new}, under this choice of $\eta_t$,
    \begin{equation*}
       X_T = X''_T \,\, a.s..
    \end{equation*}
    Then the Girsanov transform, the data processing inequality, combined with H\"older's inequality imply that
    \begin{multline}\label{eq:renyigirsaanov2}
        \mathcal{R}_q\left(\delta_{x} P_T \,\Vert\, \delta_{x'} P_T \right) = \mathcal{R}_q\left(\Law(X_T) \,\Vert\, \Law(X'_T) \right)\\
        = \mathcal{R}_q\left(\Law(X''_T) \,\Vert\, \Law(X'_T) \right) \leq \frac{1}{2(q - 1)}\log \mathbb{E}\left[\exp\left(c_q \int_{\delta T}^T \eta^2_t|X_t - X_t''| \,\mathrm{d}t \right)\right].
    \end{multline}
where $c_q:=\frac{(q-1)+2(q-1)^2}{2}$. See more details in Appendix \ref{app:girsanov}.
By \eqref{eq:secondpartpointwise} in Proposition \ref{prop:L1contraction_new},
\begin{equation}\label{eq:integral-as-bound}
\int_{\delta T}^T \eta_t^2|X_t-X''_t|\,dt\le |X_{\delta T}-X''_{\delta T}|^2\cdot\Phi(T)\quad\text{a.s.,}
\end{equation}
where the deterministic factor $\Phi(T)$ is defined by
\begin{equation}\label{eq:PhiTdef}
\begin{aligned}
&\quad\Phi(T):=4\frac{(a-\tfrac12)^2 L^2}{\big(e^{(a-\tfrac12)L(T-\delta T)}-1\big)^2}\int_{\delta T}^T e^{(2a+1)L(t-\delta T)}\left(1-\frac{e^{(a-\tfrac12)L(t-\delta T)}-1}{e^{(a-\tfrac12)L(T-\delta T)}-1}\right)^2 dt.
\end{aligned}
\end{equation}
% Note that for negative $a$, $\Phi(T) \sim T^{-1}$ as $T \rightarrow 0$ and $\Phi\sim $ as $T \rightarrow \infty$. When $a=-1$,
% \begin{equation*}
%     \Phi(T) = \frac{9M\,e^{2M(T-\delta T)}\left[5\,e^{4M(T-\delta T)} - 16\,e^{\frac{5}{2}M(T-\delta T)} + 20\,e^{M(T-\delta T)} - 9\right]}{20\left(e^{\frac{3}{2}M(T-\delta T)}-1\right)^4}
% \end{equation*}
A rough estimate (via $\big(1-\tfrac{e^{(a-\tfrac12)L(t-\delta T)}-1}{e^{(a-\tfrac12)L(T-\delta T)}-1}\big)^2\le 1$) gives
% \tcb{(remark: maybe can get slightly better $x-x'|$ dependence if choose smaller value of $a$ and keep this term---this is somehow equivalent to making $M$ negative)}
\begin{equation*}
\Phi(T)\le 4\frac{(a-\tfrac12)^2 L\,(e^{(2a+1)L(T-\delta T)}-1)}{(2a+1)\big(e^{(a-\tfrac12)L(T-\delta T)}-1\big)^2}.
\end{equation*}
Combining \eqref{eq:integral-as-bound} with Lemma \ref{lmm:asbound}, which gives $|X_{\delta T}-X''_{\delta T}|\le D$ a.s.~where $D:=|x-x'|\vee(R+1)$, one has
\begin{equation*}
\int_{\delta T}^T \eta_t^2|X_t-X''_t|\,dt\le D\,|X_{\delta T}-X''_{\delta T}|\,\Phi(T)\quad\text{a.s.}
\end{equation*}
Note that for any $\bar{x} > 0$,
\begin{equation*}
    e^x - 1 \leq \frac{e^{\bar{x}}-1}{\bar{x}}x,\,\,\forall\,x\in [0,\bar{x}].
\end{equation*}
Taking $x = c_q D|X_{\delta T}-X''_{\delta T}|\Phi(T)$ and $\bar{x} = c_q D^2\Phi(T)$, one has
\begin{equation*}
\mathbb{E}\exp\!\left(c_q\int_{\delta T}^T\eta_t^2|X_t-X''_t|\,dt\right)\le 1+\frac{e^{c_q D^2\Phi(T)}-1}{D}\,\mathbb{E}|X_{\delta T}-X''_{\delta T}|.
\end{equation*}
Finally, using the $L^1$ contraction (\eqref{eq:firstpartL1} in Proposition \ref{prop:L1contraction_new}), one has
\begin{equation}\label{eq:renyiupperbound_withphi}
\mathcal{R}_q (\delta_{x} P_T \,\Vert\, \delta_{x'} P_T)\le \frac{1}{2(q-1)}\log\left[1+\frac{C_1|x-x'|\,e^{-2\nu\delta T}}{D}\left(e^{c_q D^2\Phi(T)}-1\right)\right].
\end{equation}
Using the upper bound for $\Phi(T)$ above, one finally obtains
\begin{equation*}
\mathcal{R}_q (\delta_{x} P_T \,\Vert\, \delta_{x'} P_T)
\le \frac{1}{2(q-1)}\log\Big[1+\frac{C_1|x-x'|\,e^{-2\nu\delta T}}{D}\\
\cdot\Big(\exp\big(4\frac{c_q D^2(a-\tfrac12)^2 L\,(e^{(2a+1)L(T-\delta T)}-1)}{(2a+1)\big(e^{(a-\tfrac12)L(T-\delta T)}-1\big)^2}\big)-1\Big)\Big].
\end{equation*}
Choose 
\begin{equation}
    a=1,\quad 1-\delta = \min(\frac{1}{2},\frac{1}{T}).
\end{equation}
Asymptotically,
\begin{equation*}
        \mathcal{R}_q(\delta_{x} P_T \,\Vert\, \delta_{x'} P_T) \lesssim
        \begin{cases}
            q\dfrac{D^2}{T}, & \text{as } T \rightarrow 0,      \\[3mm]
            |x-x'|\frac{e^{C_Lc_q D^2}-1}{(q-1)D}e^{-2\nu T},   & \text{as } T \rightarrow \infty.
        \end{cases}
    \end{equation*}
where $C_L:=\frac{L(e^{3L}-1)}{3(e^{L/2}-1)^2}$. The factor $q$ comes from $c_q / (q-1) \leq q$. This proves \eqref{eq:renyi-finalthm} and \eqref{eq:asym_thm}.

\end{proof}

\begin{remark}
Theorem \ref{thm:renyi} has the same scaling as the standard estimate in the globally strongly convex case. When $U$ is globally $m$-strongly convex, one can use a synchronous coupling from time $0$ and repeat the estimate leading to \eqref{eq:renyiupperbound_withphi}.
Under this coupling, the distance scale becomes $D=|x-x'|$, while the prefactor in \eqref{eq:renyiupperbound_withphi} reduces to $1$ since $C_1=1$ and $\delta=0$. Moreover, the deterministic factor in the exponential satisfies $\Phi(T)\lesssim m/(e^{C'mT}-1)$ for some constant $C'>0$. Therefore, there exist constants $C,C'>0$, independent of $T$, $m$, and $q$, such that
\begin{equation}
    \mathcal{R}_q (\delta_{x} P_T \,\Vert\, \delta_{x'} P_T) \leq Cq|x-x'|^2\frac{m}{e^{C'mT}-1}.
\end{equation}
The bound in the present non-convex setting is also consistent with the KL estimate in Theorem \ref{thm:overdamped} in the limit $q \rightarrow 1$. In particular, the dependence on the initial separation reduces to $|x-x'| (|x-x'| \vee (R+1))$ as $q\rightarrow 1$.
\end{remark}

\begin{remark}
The synchronous coupling on $[\delta T,T]$ is essential in the proof of Proposition \ref{prop:L1contraction_new} and Theorem \ref{thm:renyi}. In the proof of Theorem \ref{thm:renyi}, we control an exponential moment by linearizing the exponential function on a bounded interval; this requires a bounded, hence non-singular, control $\eta_t$. The terminal synchronous phase is what allows such a bounded control to force $Z_T=0$ a.s. Indeed, if the reflection coupling \eqref{eq:reflectednoise} were used throughout $[0,T]$, then before the coupling time $\tau$ the distance process would satisfy $d|Z_t|=b_t\,dt+\sigma_t\,dB_t$, where $b_t$ and the non-zero diffusion coefficient $\sigma_t$ are uniformly bounded. The first hitting time of $0$ for such one-dimensional diffusions has a non-trivial tail; see, for instance, \cite{klein1952mean,martin2019long}. Thus one cannot expect $Z_T=0$ a.s. under a purely reflective coupling with bounded control.

The same synchronous phase is also what preserves the exponential decay of $\mathcal{R}_q(\delta_{x} P_T \,\Vert\, \delta_{x'} P_T)$ as $T\rightarrow \infty$. The key input is the almost-sure decay estimate \eqref{eq:secondpartpointwise}. Consequently, after the Girsanov transform, when estimating
$$\frac{1}{2(q - 1)}\log \mathbb{E}\left[\exp\left(c_q \int_0^T \eta^2_t|X_t - X_t''| \,\mathrm{d}t \right)\right],$$
we can first bound the time integral pathwise and only then take expectation. With only an $L^1$ contraction estimate, one would instead have to pass the expectation through the time integral, for instance by Jensen's inequality, which would introduce an additional factor of $T$ in the final exponential term.
\end{remark}

\subsection{Asymptotic uniform Harnack inequalities}\label{sec:harnack}

As mentioned in many related results \cite{altschuler2024shifted, altschuler2025shifted, wang2010harnack}, the reverse transportation inequality obtained in Theorem \ref{thm:overdamped} and Theorem \ref{thm:renyi} above is a dual version of Harnack inequalities. In detail, for any positive smooth test function $\varphi$, for $q=1$, Theorem \ref{thm:overdamped} can be proved to be equivalent to the following log-Harnack inequality:
\begin{equation}\label{eq:logharnack}
    P_T \log \varphi(x) \leqslant C(x, x', T)+\log P_T\,\varphi(x'),
\end{equation}
where
\begin{equation*}
    C(x,x',T) := \frac{2C_1^3 \nu }{e^{2\nu T}-1}\,|x-x'|\,\big(|x-x'|\vee(R+1)\big),
\end{equation*}
and $C_1$ is defined in Proposition \ref{prop:L1contraction}. Moreover, for $q>1$, Theorem \ref{thm:renyi} can be proved to be equivalent to the following power-Harnack inequality:
\begin{equation}\label{eq:powerharnack}
    P_T \varphi(x) \leq C_{q'}(x, x', T) \big(P_T(\varphi^{q'})(x')\big)^{1 / q'}, \quad q' = \frac{q}{q-1},
\end{equation}
where, writing $D:=|x-x'|\vee(R+1)$, the logarithm of the constant satisfies
\begin{equation}
    \log C_{q'}(x, x', T)\lesssim
    \begin{cases}
         (q-1)\dfrac{D^2}{T},& \text{as } T \rightarrow 0;      \\[3mm]
         \dfrac{|x-x'|}{qD}\left(e^{C_Lc_qD^2}-1\right)e^{-2\nu T},& \text{as } T \rightarrow \infty.
    \end{cases}
\end{equation}
Similar results and derivations can also be found in \cite[Section 6]{altschuler2024shifted}. We provide some details below for completeness.

In fact, by Donsker-Varadhan variational principle \cite{boue1998variational, deuschel2001large}, one has
\begin{equation*}
    \int \varphi \,\mathrm{d}(\delta_x P_T) \leq \KL(\delta_x P_T \,\Vert\, \delta_{x'} P_T) + \log \int \exp(\varphi) \,\mathrm{d}(\delta_{x'} P_T)
\end{equation*}
with equality if and only if $\varphi = \log \frac{d(\delta_x P_T)}{d(\delta_{x'} P_T)} + a$, $a \in \mathbb{R}$. The equivalence between Theorem \ref{thm:overdamped} and \eqref{eq:logharnack} follows.

Moreover, note that
$$\big(P_T\left(\varphi^{q'}\right)(x')\big)^{1 / q'}=\|\varphi\|_{L^{q'}\left(\delta_{x'} P_T\right)}$$
and
$$P_T \varphi(x)=\int \varphi \,\mathrm{d}\left(\delta_x P_T\right)=\int \varphi \frac{d\left(\delta_x P_T\right)}{d\left(\delta_{x'} P_T\right)} \,\mathrm{d}\left(\delta_{x'} P_T\right).$$
Then one obtains from H\"older's inequality (with equality if and only if $\varphi = a\left(\frac{d\left(\delta_x P_T\right)}{d\left(\delta_{x'}P_T\right)} \right)^{q-1}$ for some $a > 0$) that the optimal $C_{q'}(x,x',T)$ is given by
\begin{equation*}
    \frac{P_T \varphi(x)}{\big(P_T(\varphi^{q'})(x')\big)^{1 / q'}} \leq \left\|\frac{d\left(\delta_x P_T\right)}{d\left(\delta_{x'} P_T\right)}\right\|_{L^q\left(\delta_{x'} P_T\right)}=\exp \left(\frac{q-1}{q} \mathcal{R}_q\left(\delta_x P_T \,\Vert\, \delta_{x'} P_T\right)\right).
\end{equation*}
This then implies the equivalence between Theorem \ref{thm:renyi} and \eqref{eq:powerharnack}.

We conclude the above equivalent results of asymptotic uniform Harnack inequalities in the following theorem:
\begin{theorem}
    Under Assumption \ref{ass:farfield}, for any $T>0$, $q > 1$ and $x,x' \in \mathbb{R}^d$, the log-Harnack inequality \eqref{eq:logharnack} and the power-Harnack inequality \eqref{eq:powerharnack} hold.
\end{theorem}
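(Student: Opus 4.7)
The plan is to derive both Harnack inequalities as dual statements of Theorem~\ref{thm:overdamped} and Theorem~\ref{thm:renyi} respectively, using two standard variational duality principles: the Donsker--Varadhan formula for the KL case and H\"older's inequality for the R\'enyi case. Since the hard analytic work (coupling construction, Lyapunov estimates, Girsanov computation) has already produced the divergence bounds, what remains is essentially unpacking the duality.

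For the log-Harnack inequality \eqref{eq:logharnack}, I would invoke the Donsker--Varadhan variational representation of the relative entropy: for any probability measures $\mu, \nu$ on $\mathbb{R}^d$ and any measurable $\varphi$ for which both sides are well-defined,
\begin{equation*}
    \int \varphi \,\mathrm{d}\mu \;\leq\; \KL(\mu \,\Vert\, \nu) + \log \int e^{\varphi} \,\mathrm{d}\nu.
\end{equation*}
Applying this with $\mu = \delta_x P_T$ and $\nu = \delta_{x'} P_T$, the left-hand side equals $P_T\varphi(x)$ and the last term equals $\log P_T(e^{\varphi})(x')$. Substituting the bound from Theorem~\ref{thm:overdamped} on the KL-divergence term yields \eqref{eq:logharnack} with the explicit constant $C(x,x',T)=CM_{x,x'}\nu(e^{\nu T}-1)^{-1}|x-x'|^2$. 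The converse direction (that the Harnack inequality implies the KL bound) follows by taking the supremum over $\varphi$ in the variational formula, realized at $\varphi = \log \frac{d(\delta_x P_T)}{d(\delta_{x'} P_T)}$.

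For the power-Harnack inequality \eqref{eq:powerharnack}, I would apply H\"older's inequality with conjugate exponents $q$ and $q' = q/(q-1)$:
\begin{equation*}
    P_T\varphi(x) \;=\; \int \varphi\, \frac{d(\delta_x P_T)}{d(\delta_{x'} P_T)} \,\mathrm{d}(\delta_{x'} P_T)
    \;\leq\; \Bigl\|\tfrac{d(\delta_x P_T)}{d(\delta_{x'} P_T)}\Bigr\|_{L^q(\delta_{x'} P_T)} \,\|\varphi\|_{L^{q'}(\delta_{x'} P_T)}.
\end{equation*}
By the definition of the R\'enyi divergence, $\|d(\delta_x P_T)/d(\delta_{x'} P_T)\|_{L^q(\delta_{x'} P_T)} = \exp\!\bigl(\tfrac{q-1}{q}\mathcal{R}_q(\delta_x P_T \,\Vert\, \delta_{x'} P_T)\bigr)$, and the $L^{q'}$-norm is precisely $P_T(\varphi^{q'})(x')^{1/q'}$. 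Inserting the bound from Theorem~\ref{thm:renyi} and examining the asymptotic regimes $T\to 0$ and $T\to\infty$ produces the prefactor $C_{q'}(x,x',T)$ in the stated form, using the short-time behavior $\sim qM_{x,x'}^2|x-x'|^2/T$ and the long-time behavior $\sim qe^{-3\nu T}M_{x,x'}|x-x'|^2$ recorded in \eqref{eq:renyithm2}, multiplied by the factor $(q-1)/q$ in the exponent.

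The main (mild) obstacle is a book-keeping issue rather than a conceptual one: one must track the asymmetry between the R\'enyi divergence $\mathcal{R}_q(\delta_x P_T \,\Vert\, \delta_{x'} P_T)$ and its reverse (which is what the Girsanov derivation in Theorem~\ref{thm:renyi} initially controls), and verify that the symmetry argument at the end of the proof of Theorem~\ref{thm:renyi} lets us use the bound in the direction needed by H\"older's inequality. A second routine care-point is justifying absolute continuity $\delta_x P_T \ll \delta_{x'} P_T$ so that the Radon--Nikodym derivative exists; this follows because the transition density of the Langevin semigroup has full support for each $T>0$, making all the divergence expressions and the $L^q$-norm of the density finite and the duality identifications rigorous.
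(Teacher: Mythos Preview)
Your proposal is correct and follows essentially the same route as the paper: the log-Harnack inequality is obtained from Theorem~\ref{thm:overdamped} via the Donsker--Varadhan variational formula, and the power-Harnack inequality from Theorem~\ref{thm:renyi} via H\"older's inequality with the identification $\|d(\delta_x P_T)/d(\delta_{x'}P_T)\|_{L^q(\delta_{x'}P_T)}=\exp\bigl(\tfrac{q-1}{q}\mathcal{R}_q(\delta_x P_T\,\Vert\,\delta_{x'}P_T)\bigr)$. Your remarks about the R\'enyi asymmetry and absolute continuity are more explicit than the paper's own discussion but add no new idea.
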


\section{Conclusion and Outlook}\label{sec:conclusion}

In this paper, we study Langevin dynamics with a non-convex potential. We prove long-time, dimension-free reverse transportation inequalities under KL divergence and extend them to the more general R\'enyi divergence. We also discuss the duality between these reverse transportation inequalities and Harnack inequalities. The proof combines reflection coupling with a shifted Girsanov argument based on a carefully designed additional drift. This combination may also be useful for other stochastic systems with non-globally-dissipative drifts. We conclude with several possible extensions and open questions.

\paragraph{Extension to underdamped Langevin dynamics.} The first natural question is whether our proof framework can be directly applied to the underdamped Langevin dynamics defined by
\begin{equation}\label{eq:ULD}
    \left\{
    \begin{aligned}
         & dX_t = P_t dt,                                                 \\
         & dP_t = -\nabla U(X_t) dt - \gamma P_t dt + \sqrt{2\gamma}dB_t.
    \end{aligned}
    \right.
\end{equation}
where $\gamma > 0$ is the damping coefficient.
This extension appears possible, but new difficulties arise. To our knowledge, for \eqref{eq:ULD}, both the shifted-Girsanov-type argument \cite{altschuler2025shifted, wang2017hypercontractivity} and the reflection coupling \cite{schuh2024global, kazeykina2024ergodicity, guillin2022convergence, eberle2019couplings} have been well studied separately. The main potential challenge is that, in the region where the potential $U$ is strongly convex, the shifted Girsanov argument requires a time-dependent coordinate twisting (i.e. $(x,p) \mapsto (x, x + \gamma_t^{-1} p)$, $\dot{\gamma}_t \nequiv 0$), which would introduce significant difficulties in constructing the Lyapunov function used for reflection coupling. Besides the current coupling approach, another possible method to obtain reverse transportation inequalities for \eqref{eq:ULD} is to use hypocoercivity with the help of H\'erau's functional \cite{chen2024uniform,villani2009hypocoercivity}.

\paragraph{Extension to mean-field Langevin dynamics.} A promising direction is the extension to mean-field Langevin equations, where the drift term in \eqref{eq:overdamped} also depends on the law of the solution. Such stochastic systems can be viewed as mean-field limits of interacting particle systems and have a wide range of applications. When the noise is nondegenerate, the current framework should be adaptable under suitable assumptions on the nonlinear drift, since reflection coupling extends naturally to this setting. When the noise is degenerate, the extension becomes less direct, for reasons similar to those in the underdamped Langevin dynamics above.

\paragraph{Extension to error analysis for numerical schemes of SDEs.} A more practical application is to extend the current coupling method to the analysis of numerical schemes for SDEs, including Langevin dynamics with non-strongly-convex potentials, thereby yielding effective sampling algorithms in the non-log-concave regime. In particular, following the series \cite{altschuler2024shifted, altschuler2025shifted}, one may develop a KL-based error analysis for both overdamped and underdamped Langevin discretizations by using a similar discrete-time coupling and replacing the Girsanov transform with the so-called ``shifted chain rule" for R\'enyi divergence. Since discrete-time reflection coupling has also been extensively studied \cite{majka2020nonasymptotic}, it is promising to combine these two discrete coupling techniques for KL error analysis. We leave this direction for future work.

\subsection*{Acknowledgments}

The work is supported in part by National Science Foundation via award DMS-2309378. The authors thank Sinho Chewi, Jonathan Mattingly, Panpan Ren, Zhenjie Ren, Feng-Yu Wang and Lihan Wang for helpful discussions.

\appendix

\section{Girsanov transform and R\'enyi divergence}\label{app:girsanov}

We give a rigorous derivation for \eqref{eq:RqGirsanov} (or \eqref{eq:renyigirsaanov2}). To the best of our knowledge, the first such estimate appears in \cite{chewi2025analysis}, and we refer the reader to \cite[Section 6]{chewi2025analysis} for a similar proof.

Fix any $\delta \in (0,1)$. Consider the following SDEs for $t\geq  0$:
\begin{equation*}
    \begin{aligned}
         & dX^{(1)}_t = b^{(1)}(X^{(1)}_t)dt + \sqrt{2}\,dB_t, \\
         & dX^{(2)}_t = \tone_{\{t < \delta T \}}b^{(1)}(X^{(2)}_t)dt + \tone_{\{t \geq \delta T \}}b^{(2)}(X^{(2)}_t)dt + \sqrt{2}\,dB_t, \\
    \end{aligned}
\end{equation*}
with the same Brownian motion $B_t$ and the same initial $X^{(1)}_{0} = X^{(2)}_{0} = x_0 \in \mathbb{R}^d$.
Then a straightforward observation is that 
\begin{equation}\label{eq:sameinitial}
    X_t^{(1)} = X_t^{(2)} \quad a.s.\,\,\forall\,t \in [0,\delta T].
\end{equation}
Viewing $X^{(1)}$ as the controlled interpolation process $X''$ and $X^{(2)}$ as the comparison process $X'$, this is exactly the setting in Section \ref{sec:renyi}. Denote $\mathcal{F}_t := \sigma(B_s, s\leq t)$. Denote the associated path measures by $P^{(i)}_{[0,t]}$ and the time marginal by $P^{(i)}_t$ for $i = 1, 2$ and any $t\geq 0$. By the data-processing inequality \cite{thomas1992information},
\begin{equation*}
    \mathcal{R}_q(P^{(1)}_T \,\Vert\, P^{(2)}_T) \leq \mathcal{R}_q(P^{(1)}_{[0,T]} \,\Vert\, P^{(2)}_{[0,T]}),\quad \mathcal{R}_q(P^{(2)}_T \,\Vert\, P^{(1)}_T) \leq \mathcal{R}_q(P^{(2)}_{[0,T]} \,\Vert\, P^{(1)}_{[0,T]}).
\end{equation*}
Moreover, since $X^{(1)}$ and $X^{(2)}$ share the same drift when $t<\delta T$, by \eqref{eq:sameinitial} and Girsanov theorem \cite{girsanov1960transforming}, conditioning on $\mathcal{F}_{\delta T}$,
\begin{equation}\label{eq:girsanov1}
    \frac{dP^{(1)}_{[0,T]}}{dP^{(2)}_{[0,T]}}(X^{(2)}) = \exp\Big(\frac{1}{\sqrt{2}}\int_{\delta T}^T (b^{(1)} - b^{(2)})(X_s^{(2)})   \,\mathrm{d}B_s
    - \frac{1}{4} \int_{\delta T}^T \left| (b^{(1)} - b^{(2)}) (X_s^{(2)})\right|^2 \,\mathrm{d}s\Big),
\end{equation}
and
\begin{equation}\label{eq:girsanov2}
    \frac{dP^{(2)}_{[0,T]}}{dP^{(1)}_{[0,T]}}(X^{(2)}) = \exp\Big(\frac{1}{\sqrt{2}}\int_{\delta T}^T (b^{(2)} - b^{(1)})(X_s^{(2)})   \,dB_s
    +\frac{1}{4} \int_{\delta T}^T \left| (b^{(1)} - b^{(2)}) (X_s^{(2)})\right|^2 \,\mathrm{d}s\Big).
\end{equation}
The explicit formulas above yield estimates for the R\'enyi divergence in both directions. For the application in Section \ref{sec:renyi}, the post-$\delta T$ drift difference may be a progressively measurable control rather than a state-dependent function; the same argument applies with $(b^{(1)}-b^{(2)})(X_t^{(i)})$ replaced by that adapted control, provided the corresponding Novikov condition holds.

For $\mathcal{R}_q(P^{(1)}_T \,\Vert\, P^{(2)}_T)$, by definition,
\begin{equation*}
    \mathcal{R}_q(P^{(1)}_{[0,T]} \,\Vert\, P^{(2)}_{[0,T]}) = \frac{1}{q-1} \log \mathbb{E}\left[\mathbb{E}\left[\Big|\frac{dP^{(1)}_{[0,T]}}{dP^{(2)}_{[0,T]}}(X^{(2)})\Big|^q \Big| \mathcal{F}_{\delta T}\right]\right] = \frac{1}{q-1}\log \mathbb{E}\left[\mathbb{E}\left[\mathcal{E}_T \Big| \mathcal{F}_{\delta T}\right]\right],
\end{equation*}
where
\begin{equation*}
    \begin{aligned}
        \mathcal{E}_T & := \exp\Big(\frac{1}{\sqrt{2}}\int_{\delta T}^T q(b^{(1)} - b^{(2)})(X_s^{(2)})   \,\mathrm{d}B_s
        - \frac{1}{4} \int_{\delta T}^T q\left| (b^{(1)} - b^{(2)}) (X_s^{(2)})\right|^2 \,\mathrm{d}s\Big) \\
                      & =\exp\Big(\frac{1}{\sqrt{2}}\int_{\delta T}^T q(b^{(1)} - b^{(2)})(X_s^{(2)})   \,\mathrm{d}B_s
        -  \frac{1}{2}\int_{\delta T}^T q^2\left| (b^{(1)} - b^{(2)}) (X_s^{(2)})\right|^2 \,\mathrm{d}s\Big)          \\
                      & \quad \cdot \exp\Big(
        \frac{2q^2-q}{4}\int_{\delta T}^T \left| (b^{(1)} - b^{(2)}) (X_s^{(2)})\right|^2 \,\mathrm{d}s\Big) =: \mathcal{E}^{(1)}_T \cdot \mathcal{E}_T^{(2)}.
    \end{aligned}
\end{equation*}
By H\"older's inequality,
\begin{equation*}
    \mathbb{E}[\mathcal{E}_T |\mathcal{F}_{\delta T}]\leq \sqrt{\mathbb{E}[(\mathcal{E}_T^{(1)})^2|\mathcal{F}_{\delta T}]}\sqrt{\mathbb{E}[(\mathcal{E}_T^{(2)})^2|\mathcal{F}_{\delta T}]}.
\end{equation*}
By It\^o's formula, $\mathbb{E}[(\mathcal{E}_T^{(1)})^2|\mathcal{F}_{\delta T}] = 1$. Therefore, concluding the above, one has
\begin{equation}\label{eq:girsanov11}
\begin{aligned}
    \mathcal{R}_q(P^{(1)}_T \,\Vert\, P^{(2)}_T) &\leq \frac{1}{q-1}\log \mathbb{E}\sqrt{\mathbb{E}[(\mathcal{E}_T^{(2)})^2|\mathcal{F}_{\delta T}]}\\
    &\leq \frac{1}{2(q-1)} \log \mathbb{E}\exp\Big(
    \int_{\delta T}^T \frac{2q^2-q}{2}\left| (b^{(1)} - b^{(2)}) (X_s^{(2)})\right|^2 \,\mathrm{d}s\Big),
\end{aligned}
\end{equation}
where the second inequality is due to Jensen's inequality.

For the application in Section \ref{sec:KLcontraction}, it is more useful to write the same divergence with expectation under $P^{(1)}_{[0,T]}$. Let $r=q-1$. By the definition of R\'enyi divergence,
\begin{equation*}
    \mathcal{R}_q(P^{(1)}_{[0,T]} \,\Vert\, P^{(2)}_{[0,T]})
    =\frac{1}{r}\log \mathbb{E}_{P^{(1)}}\left[\mathbb{E}_{P^{(1)}}\left[\left(\frac{dP^{(1)}_{[0,T]}}{dP^{(2)}_{[0,T]}}\right)^r \Big| \mathcal{F}_{\delta T}\right]\right],
\end{equation*}
Under $P^{(1)}$, writing $B$ for the Brownian motion in the $P^{(1)}$ dynamics, conditioning on $\mathcal{F}_{\delta T}$, this density can be represented as
\begin{equation}\label{eq:RNexpression}
    \frac{dP^{(1)}_{[0,T]}}{dP^{(2)}_{[0,T]}}(X^{(1)})
    =\exp\Big(\frac{1}{\sqrt{2}}\int_{\delta T}^T (b^{(1)} - b^{(2)})(X_s^{(1)})\,\mathrm{d}B_s
    +\frac{1}{4}\int_{\delta T}^T |(b^{(1)}-b^{(2)})(X_s^{(1)})|^2\,\mathrm{d}s\Big),
\end{equation}
The same H\"older argument gives
\begin{equation}\label{eq:Girsanov22}
    \mathcal{R}_q(P^{(1)}_{T} \,\Vert\, P^{(2)}_{T}) \leq \frac{1}{2(q-1)} \log\mathbb{E}_{P^{(1)}}\exp\left(\frac{(q-1) + 2(q-1)^2}{2} \int_{\delta T}^T \left| (b^{(1)} - b^{(2)})(X_s^{(1)})\right|^2 ds\right).
\end{equation}

Compared with \eqref{eq:girsanov11}, \eqref{eq:Girsanov22} does not blow up when $q \to 1$, which makes our final results in Section~\ref{sec:KLcontraction} consistent for all $q$. In the setting of \eqref{eq:coupling}, take $X^{(1)}=X''$, $X^{(2)}=X'$, and replace $(b^{(1)}-b^{(2)})(X_s^{(1)})$ by the adapted control $u_t=\eta_t(X_t-X''_t)/\sqrt{|X_t-X''_t|}$, with $u_t=0$ after the coupling time. Changing measure removes $u_t$ from the dynamics of $X''$, so the comparison process has the same time marginal as $X'$. Since $X_T''=X_T$ a.s., \eqref{eq:Girsanov22} gives
\begin{equation}
    \mathcal{R}_q(\delta_{x} P_T \,\Vert\, \delta_{x'} P_T) \leq \frac{1}{2(q - 1)}\log \mathbb{E}\left[\exp\left(\frac{(q-1) + 2(q-1)^2}{2} \int_{\delta T}^T \eta^2_t|X_t - X_t''| \,\mathrm{d}t \right)\right].
\end{equation}

An argument similar to \eqref{eq:Girsanov22} can be found in \cite[Lemma 11]{zhang2025analysis}. The difference is that, in \cite{zhang2025analysis} the author is using an anticipative Girsanov theorem instead, since the process considered therein is not adapted.

\bibliographystyle{plain}
\bibliography{main}

\end{document}